%*****************************************************************************
%
%   Title: Zariski chambers on surfaces of high Picard number
%   Authors: Th. Bauer, D. Schmitz
%
%   Date: 19.04.2012
%
%*****************************************************************************

\documentclass[11pt]{article}
\usepackage{anysize}\marginsize{3.5cm}{3.5cm}{1.3cm}{2cm}
\makeatletter\@ifundefined{pdfpagewidth}{}{\pdfpagewidth=21.0cm\pdfpageheight=29.7cm}\makeatother % als Information für pdfTeX
\usepackage{amssymb,amsmath,graphicx}

\emergencystretch=3em
\pagestyle{myheadings}

%*****************************************************************************
% Remove the following lines in order to return to standard LaTeX.

\makeatletter
\let\orig@item=\@item \def\@item[#1]{\orig@item[\rm #1]}
\renewenvironment{abstract}{\begin{quote}\footnotesize\textbf{\abstractname.}}{\end{quote}\bigskip}
\renewcommand\@seccntformat[1]{\csname the#1\endcsname.\enspace}
\renewcommand\paragraph{\@startsection{paragraph}{4}{\z@}{1\baselineskip}{-0.5em}{\normalsize\bfseries}}
\newcommand\secparagraph{\@startsection{subsection}{2}{\z@}{1\baselineskip}{-0.5em}{\normalsize\bfseries}}
   % Dies ist eine subsection, die wie ein paragraph gesetzt wird.
\let\origcaption=\caption \renewcommand\caption[1]{\parbox{0.66\textwidth}{\origcaption{#1}}}

\renewcommand\@begintheorem[2]{\trivlist\item[\hskip\labelsep{\bfseries#1 #2.}]\it}
\renewcommand\@opargbegintheorem[3]{\trivlist\item[\hskip\labelsep{\bfseries#1 #2}] {\bfseries(#3).}\enspace\it\ignorespaces}
\makeatother

%*****************************************************************************

\newtheorem{satz}{Satz}[section]
\makeatletter\@addtoreset{equation}{satz}\makeatother

\newtheorem{proposition}[satz]{Proposition}
\newtheorem{remark}[satz]{Remark}
\newtheorem{algorithm}[satz]{Algorithm}

\newtheorem{introtheorem}{Theorem}
   % Theorem-Umgebung mit unabhängiger Numerierung

   % Verhindert die Numerierung.
\newenvironment{proof}[1][Proof]{\trivlist\item[\hskip\labelsep{\it #1.}]}{\hspace*{\fill}$\Box$\endtrivlist}
   % Beweis-Umgebung, #1 = abweichender Titel, z.B. 'Beweis von #...'
%\newcommand\displayendproof{\eqno{\Box}}

\newcommand\compact{\itemsep=0cm \parskip=0cm}

\newcommand\subjclass[1]{{\renewcommand\thefootnote{}\footnotetext{2010 \textit{Mathematics Subject Classification:} #1.}}}
\newcommand\keywords[1]{{\renewcommand\thefootnote{}\footnotetext{\textit{Keywords.} #1.}}}
\newcommand\engqq[1]{``#1''}
\newcommand\tab[2][t]{\begin{tabular}[#1]{@{}l@{}}#2\end{tabular}}

\renewcommand\emptyset{\varnothing}  % from amssymb
\renewcommand\ge{\geqslant}  % from amssymb
\renewcommand\le{\leqslant}  % from amssymb
  % from amssymb
\renewcommand\leq{\leqslant}  % from amssymb
\renewcommand\epsilon{\varepsilon}
\renewcommand\phi{\varphi}

\renewcommand\P{\mathbb P}

\newcommand\be{\begingroup\arraycolsep=0.13888em\begin{eqnarray*}}
\newcommand\ee{\end{eqnarray*}\endgroup}
\newcommand\liste[3]{\mbox{$#1_{#2},\dots,#1_{#3}$}}
\newcommand\set[1]{\left\{#1\right\}}
\newcommand\sset[1]{\left\{\,#1\,\right\}}
\newcommand\eqdef{=_{\rm def}}
\newcommand\tensor{\otimes}
\newcommand\union{\cup}
\newcommand\maxmatrcols{10}
\newlength\matrcolsep \matrcolsep=\arraycolsep
\newcommand\matr[1]{{\arraycolsep=\matrcolsep\left(\begin{array}{*{\maxmatrcols}{c}}#1\end{array}\right)}}
\newcommand\tline{\noalign{\vskip0.4ex}\hline\noalign{\vskip0.65ex}}

\newcommand\N{\mathbb N}
\newcommand\Q{\mathbb Q}
\newcommand\R{\mathbb R}
\newcommand\Z{\mathbb Z}
\newcommand\C{\mathbb C}

\newcommand\newop[2]{\newcommand#1{\mathop{\rm #2}\nolimits}}
\newop\NS{NS} % N\'eron-Severi-Gruppe
\newop\vol{vol}
\newop\BigCone{Big} % \Big gibt es schon für Klammergrößen!

\newenvironment{lines}
   {\newcommand\+{\hspace*{1.5em}\ignorespaces} % zum Einrücken, \qquad = \hspace*{2em}
   \begin{list}{}{\leftmargin=\leftmargini\parsep=0cm}\item\begin{obeylines}}
   {\end{obeylines}\end{list}}
   % Umgebung für Algorithmen. Zeilenwechsel werden beachtet,
   % mit \+ können Einrückungen gemacht werden.

%*****************************************************************************
%*****************************************************************************

\begin{document}

\title{Zariski chambers on surfaces of high Picard number}
\author{Thomas Bauer and David Schmitz}
\date{March 13, 2012}
\maketitle
\thispagestyle{empty}
\subjclass{Primary 14C20; Secondary 14Q10}
\keywords{Zariski decomposition, chambers, Segre-Schur quartic, computation}

\begin{abstract}
   We present an improved algorithm for the computation of Zariski
   chambers on algebraic surfaces. The new algorithm significantly
   outperforms the so far available method and allows therefore to
   treat surfaces of high Picard number, where huge chamber numbers
   occur. As an application, we efficiently compute the number of
   chambers supported by the lines on the Segre-Schur quartic.
\end{abstract}

%*****************************************************************************

\section*{Introduction}

   Zariski chambers are natural pieces into which the big cone of
   an algebraic surface decomposes. Their properties were first
   studied in~\cite{BKS}. It is an intriguing problem, raised
   in~\cite{BFN}, to determine explicitly
   how many Zariski chambers a given surface has. In other words,
   we ask on a smooth projective surface $X$ for the quantity
   $$
      z(X)\eqdef\#\set{\mbox{Zariski chambers on $X$}}\in\N\union\set\infty \,.
   $$
   Roughly speaking, the number $z(X)$ measures how complicated
   the surface $X$ is from the point of view of linear series.
   Specifically, it answers the following natural questions
   (see~\cite{BFN}):
   \begin{itemize}\compact
   \item
      How many different stable base loci can occur in big linear
      series on $X$?
   \item
      How many essentially different Zariski decompositions can
      big divisors on $X$ have? (Here we consider Zariski
      decompositions to be {essentially different}, if their
      negative parts have different support.)
   \item
      How many \engqq{pieces} does the (piecewise polynomial)
      volume function on the big cone of $X$ have?
   \end{itemize}
   To get a more detailed picture of the geometry of $X$, it
   is also natural to consider for integers $d\ge 1$ the
   numbers
   $$
      z_d(X)\eqdef\#\set{{\tab[c]{Zariski chambers on $X$ that
      are\\ supported by curves of degree $\le d$}}} \,,
   $$
   where the degree of curves is taken with respect to a fixed
   ample line bundle on $X$. One has of course $z(X)=\sup_d
   z_d(X)$. While there are surfaces $X$ where $z(X)$ is
   infinite, the numbers $z_d(X)$ are always finite, because on
   any smooth surface there are only finitely many irreducible
   negative curves of fixed degree.

   In~\cite{BFN} an algorithm was presented that computes Zariski
   chambers from the intersection matrix of a set of negative
   curves, and the algorithm was applied to Del Pezzo surfaces. While this
   method is very efficient in these cases, further experience
   has shown that there do exist surfaces where the algorithm
   takes an inordinate amount of time~-- to the point of becoming
   impractical in such situations. This is for instance
   the case when the method is being applied to the 64 lines on
   the Segre-Schur quartic (see~Sect.~\ref{sect-Segre}). At first
   this phenomenon is somewhat surprising, as there are surfaces
   with many more curves to be considered (like the Del Pezzo
   surface $X_8$ with its 240 negative curves) where an
   application of the algorithm poses no practical problems at
   all. It seems that it is not so much the number of
   negative curves that matters most, but the Picard number of
   the surface and the number of chambers that are found. This is
   in accordance with the observation that the essential work
   done by the algorithm (and in fact its essential bottle-neck)
   is the computation of an enormous number of determinants --
   and the dimension of the matrices in question is bounded in
   terms of the Picard number of the surface. (The
   dimension of the determinants to be computed is bounded by
   $\rho(X)-1$.)

   In the present paper we attack this problem by providing a
   significantly improved algorithm that is suited also for
   surfaces with higher Picard number. Our focus here is on the
   efficient calculation of the relevant determinants. As is
   well-known, usual fraction-free algorithms for the computation
   of the determinant of an $n\times n$ integer matrix, like the
   fraction-free Bareiss algorithm, have complexity $O(n^3)$.
   And, to our knowledge, even the currently best fraction-free
   algorithms for determinants over integral domains need
   $O(n^{2.697263})$ ring operations (see~\cite{KalVil}).
   Our
   main point is that within our new algorithm each of the
   necessary determinant computations has a complexity of only
   $O(n^2)$ -- this is achieved by reusing information from
   previous computations (see the details in
   Sect.~\ref{sect-algo}).

   As an application, we determine the
   number of chambers that are supported by the lines
   on the Segre-Schur quartic.
   This remarkable surface
   provides an ideal application for the new
   algorithm: It turns out that it has an enormous
   number of Zariski chambers supported by lines, and it seems
   that the surface lies at the edge of what can be practically
   computed with current methods.

   We show:

\begin{introtheorem}
   Let $X\subset\P^3$ be the Segre-Schur quartic, i.e., the
   surface given in
   homogeneous coordinates $(x_0:x_1:x_2:x_3)$ by the equation
   $$
      x_0(x_0^3-x_1^3)-x_2(x_2^3-x_3^3)=0 \,.
   $$

   {\rm(i)}
   We have
   $$
      z(X)=\infty
   $$
   and
   $$
      z_1(X)=8\,260\,383\,569 \,.
   $$

   {\rm(ii)}
   The maximal number of lines that occur in the support of a
   Zariski chamber is 19
   (which is the maximal possible value, as the Picard number
   of $X$ is 20).
\end{introtheorem}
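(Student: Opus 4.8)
The plan is to reduce all three assertions to a single piece of combinatorial data — the $64\times 64$ intersection matrix $M$ of the lines on $X$, together with the fact that the Picard number of $X$ is $20$ — supplemented by one run of the algorithm of Section~\ref{sect-algo} and two short conceptual arguments. First I would write down the $64$ lines explicitly. With $\zeta$ a primitive cube root of unity one checks at once that the lines $\set{x_0=\zeta^a x_1,\ x_2=\zeta^b x_3}$ lie on $X$, and the complete list of $64$ lines is classical (see the references in Section~\ref{sect-Segre}). Since $X$ is a smooth quartic, hence a K3 surface, adjunction gives $L^2=-2$ for every line $L$, while $L\cdot L'$ is read off from the incidence of the two lines in $\P^3$; this determines $M\in\Z^{64\times 64}$. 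A rank computation shows that $M$ has rank $20$, so the lines span a rank-$20$ sublattice of $\NS(X)$; as the Picard number of a complex K3 surface is at most $h^{1,1}=20$, it follows that $\rho(X)=20$.

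For the value of $z_1(X)$: with respect to the hyperplane class the negative curves of degree $\le 1$ are precisely the $64$ lines, so a Zariski chamber is counted by $z_1(X)$ exactly when its support is contained in this set. The algorithm of~\cite{BFN}, in the improved form of Section~\ref{sect-algo}, computes from the intersection matrix of a finite set of negative curves all Zariski chambers whose support lies in that set (whether a given set of negative curves occurs as a chamber support being determined by the intersection numbers involved, cf.~\cite{BKS}); applied to $M$ it therefore outputs $z_1(X)$, and the result is $8\,260\,383\,569$. This is where the real difficulty lies: an enumeration involving roughly $10^{10}$ chambers, each demanding determinant evaluations in matrices of size up to $19$, is impractical with the usual $O(n^3)$ fraction-free methods and becomes feasible only through the $O(n^2)$ determinant updates of Section~\ref{sect-algo}.

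The equality $z(X)=\infty$ follows once we know that $X$ carries infinitely many negative curves. Being a K3 surface with $\rho(X)=20$, the surface $X$ has infinite automorphism group — equivalently, its nef cone is not rational polyhedral — and hence infinitely many $(-2)$-curves; alternatively one may exhibit an elliptic fibration on $X$ of positive Mordell--Weil rank and take its infinitely many sections. It then suffices to recall that every single negative curve is the support of a nonempty chamber: for a negative curve $C$ on a smooth projective surface and an ample class $A$, the class $P\eqdef A-\frac{A\cdot C}{C^2}\,C$ satisfies $P\cdot C=0$, $P\cdot C'>0$ for every negative curve $C'\neq C$, and $P^2=A^2-\frac{(A\cdot C)^2}{C^2}>0$, so $P$ is big and nef, and for $t\gg 0$ the divisor $A+tC$ — which is big, being the sum of an ample and an effective divisor — has Zariski decomposition $A+tC=P+\bigl(t+\tfrac{A\cdot C}{C^2}\bigr)C$, hence lies in the chamber with support $\set C$. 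Distinct negative curves give distinct singleton supports, so infinitely many of them force $z(X)=\infty$.

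Finally, for part~(ii): the support $\mathcal N$ of a Zariski chamber is the support of the negative part of a Zariski decomposition, hence has negative definite intersection matrix; in particular $\mathcal N$ is linearly independent in $\NS(X)_\R$. By the Hodge index theorem $\NS(X)_\R$ has signature $(1,\rho(X)-1)=(1,19)$, so every negative definite subspace has dimension at most $19$, and no chamber — a fortiori no chamber supported by lines — can have more than $19$ curves in its support. That the value $19$ is attained is part of the output of the algorithm, which records the cardinality of the support of every chamber it finds: concretely it exhibits $19$ among the $64$ lines whose Gram matrix, a principal submatrix of $M$, is negative definite and which satisfy the criterion of~\cite{BKS}, so that the associated chamber is nonempty. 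Together with the upper bound this gives $19$ as the exact maximum. The one genuine obstacle throughout is the scale of the computation in the second step, combined with the need to be certain that the input $M$ — and the assertion that it has rank $20$ — is correct, since an error there would invalidate both numerical claims.
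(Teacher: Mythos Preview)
Your proposal is correct and follows essentially the same route as the paper: determine the $64\times 64$ intersection matrix of the lines, observe that it has rank $20$ so $\rho(X)=20$, use that a singular K3 surface has infinite automorphism group (Shioda--Inose) and hence infinitely many $(-2)$-curves (Kov\'acs) to get $z(X)=\infty$, run Algorithm~\ref{algo} on the negative of the intersection matrix for the value of $z_1(X)$, and bound the support size by $\rho(X)-1=19$ with attainment read off from the algorithm's output. One small simplification: your explicit construction of a big and nef $P$ with $P\cdot C=0$, and your appeal to an extra ``criterion of~\cite{BKS}'' beyond negative definiteness, are both unnecessary --- Proposition~\ref{prop-neg-def} already says that a set of negative curves supports a Zariski chamber if and only if its intersection matrix is negative definite, so infinitely many $(-2)$-curves immediately give infinitely many singleton-supported chambers, and the $8\,260\,383\,568$ negative definite principal submatrices found by the algorithm, plus one for the nef chamber, directly give $z_1(X)$.
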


   Note that the number $z_1(X)$ is bigger by a factor of about
   $10^5$ than the number $1\,501\,681$ of chambers on the Del
   Pezzo surface $X_8$ (blow-up of the plane in 8 points)~-- even
   though only 64 curves are used to build chambers, as opposed to
   240 curves on $X_8$.
   On the other hand, the
   Picard number of the Segre quartic is about twice that of $X_8$.
   One is lead to wonder in general how the number of
   Zariski chambers is related to the Picard number~-- apart from
   the fact that with higher Picard number, chambers of bigger
   support size become theoretically possible.

   Note that since the negative curves on $X$ of higher degrees
   are not known, the numbers $z_d(X)$ are at present
   inaccessible for $d\ge 2$. It would already be very
   interesting to know at what rate they grow for $d\to\infty$.

\bigskip\noindent
   Concerning the organization of this paper, we start in Sect.~1 by
   presenting the improved algorithm. In Sect.~2 we give the
   proof of the theorem on the Segre-Schur quartic
   stated above. Finally, Sect.~3 compares
   the new algorithm with the original one, providing sample
   run-times for the case of Del Pezzo surfaces, the Segre-Schur
   quartic, and for matrices related to Fermat surfaces.

%*****************************************************************************

\section{Efficient computation of Zariski chambers}\label{sect-algo}

   Zariski chambers were first studied in~\cite{BKS}, and we
   refer to that paper for a detailed introduction.
   For a very brief account,
   consider a smooth projective surface $X$ over the complex
   numbers. To any big and nef $\R$-divisor $P$ on $X$,
   one associates
   the \textit{Zariski chamber} $\Sigma_P$, which by definition
   consists of all divisor classes $[D]$ in the big cone
   $\BigCone(X)$ such that the
   irreducible curves in the negative part of the Zariski
   decomposition of $D$ are precisely the curves $C$ with $P\cdot
   C=0$. It is the main result of~\cite{BKS} that the sets
   $\Sigma_P$ yield a locally finite
   decomposition of $\BigCone(X)$ into
   locally polyhedral subcones such that
   \begin{itemize}\compact
   \item
      on each subcone the volume function is given by a single
      polynomial of degree two, and
   \item
      in the interior of each of the subcones the stable base
      loci are constant.
   \end{itemize}

   For the purposes of the present paper it is a crucial fact
   that the number of Zariski chambers can be computed from the
   intersection matrix of the negative curves on~$X$, because
   Zariski chambers correspond to negative definite reduced
   divisors~-- one has by~\cite[Prop.~1.1]{BFN}:

\begin{proposition}\label{prop-neg-def}
   The set of Zariski chambers on a smooth projective surface $X$
   that are different from the nef chamber is in bijective
   correspondence with the set of reduced divisors on $X$ whose
   intersection matrix is negative definite.
\end{proposition}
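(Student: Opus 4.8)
The plan is to write down the correspondence explicitly and check its inverse. Throughout, for a big and nef $\R$-divisor $P$ let $\mathrm{Null}(P)$ denote the (finite) set of irreducible curves $C$ on $X$ with $P\cdot C=0$, and put $N_P\eqdef\sum_{C\in\mathrm{Null}(P)}C$. The first point is that the intersection matrix of the curves in $\mathrm{Null}(P)$ is always negative definite: since $P$ is big and nef, $P^2>0$, so in $\NS(X)\otimes\R$ — which by the Hodge index theorem carries a form of signature $(1,\rho(X)-1)$ — the line $\R P$ is positive definite, and hence its orthogonal complement $P^{\perp}$ is negative definite; as all classes $[C]$ with $C\in\mathrm{Null}(P)$ lie in $P^{\perp}$, the intersection form is negative definite on their span. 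In particular every such $C$ has $C^2<0$, and $N_P=0$ exactly when $P$ is ample, i.e.\ exactly when $\Sigma_P$ is the nef chamber.

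The claimed bijection should send a chamber $\Sigma_P$ different from the nef chamber — so $P$ big and nef but not ample — to the reduced divisor $N_P$. To see that this is well defined, i.e.\ depends only on the chamber, I would observe that $D\eqdef P+N_P$ is big (it is $P$ plus an effective divisor), that $P$ is nef, that $P\cdot C=0$ for every component $C$ of $N_P$, and that the support $\mathrm{Null}(P)$ of $N_P$ has negative-definite intersection matrix by the previous paragraph; by the characterization (and uniqueness) of Zariski decompositions, $P+N_P$ is then the Zariski decomposition of $D$. Hence $D\in\Sigma_P$ and the support of the negative part of $D$ recovers $\mathrm{Null}(P)$, so if $\Sigma_P=\Sigma_{P'}$, applying this to $D$ and invoking uniqueness once more gives $\mathrm{Null}(P)=\mathrm{Null}(P')$ and $N_P=N_{P'}$. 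Injectivity is then immediate, since $\Sigma_P$ is by its very definition determined by the set $\set{C:P\cdot C=0}=\mathrm{Null}(P)$.

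For surjectivity, start from a nonzero reduced divisor $N=C_1+\dots+C_k$ whose intersection matrix $M=(C_i\cdot C_j)$ is negative definite, and fix an ample $\R$-divisor $A$. I would solve for $b=(b_1,\dots,b_k)$ the linear system $Mb=-v$ with $v_i\eqdef A\cdot C_i>0$, which has a unique solution because $M$ is invertible. Since distinct irreducible curves meet non-negatively, $-M$ is a symmetric positive-definite matrix with non-positive off-diagonal entries, hence a Stieltjes matrix, so $(-M)^{-1}$ has non-negative entries and therefore $b_i\ge 0$ for all $i$ (indeed $b_i>0$, as $b_i=0$ would force $0\le\sum_j b_j(C_i\cdot C_j)=-A\cdot C_i<0$). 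Now $P\eqdef A+\sum_i b_iC_i$ is big, being ample plus effective, and for every irreducible curve $C$ one computes $P\cdot C=A\cdot C+\sum_i b_i(C\cdot C_i)\ge A\cdot C>0$ unless $C\in\set{C_1,\dots,C_k}$, while $P\cdot C_i=0$ by the choice of $b$. Thus $P$ is nef, not ample, with $\mathrm{Null}(P)=\set{C_1,\dots,C_k}$, so $\Sigma_P$ is a Zariski chamber distinct from the nef chamber and $N_P=N$.

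The step I expect to require the most care is this last construction — specifically the non-negativity, indeed positivity, of the coefficients $b_i$, which rests on the negative-definite intersection matrix being, up to sign, a Stieltjes matrix (using crucially that distinct irreducible curves meet non-negatively), together with the verification that $\mathrm{Null}(P)$ equals exactly the prescribed set $\set{C_1,\dots,C_k}$. By comparison, the negative-definiteness of $\mathrm{Null}(P)$ is a one-line Hodge index argument, and the well-definedness and injectivity of the correspondence are forced by the definition of a Zariski chamber together with the uniqueness of Zariski decomposition.
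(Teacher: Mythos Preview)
Your argument is correct. Note, however, that the paper does not actually supply a proof of this proposition: it is quoted verbatim from \cite[Prop.~1.1]{BFN} and used as a black box, so there is nothing in the present paper to compare against at the level of proof steps.

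That said, your write-up reproduces the standard proof one finds behind such a citation. The forward direction (negative definiteness of $\mathrm{Null}(P)$) is exactly the Hodge index argument, and your well-definedness/injectivity check via the auxiliary divisor $D=P+N_P$ is clean: it uses only the definition of $\Sigma_P$ and uniqueness of the Zariski decomposition. For surjectivity, your construction $P=A+\sum_i b_iC_i$ with $b=(-M)^{-1}v$ is the expected one; the key input you correctly isolate is that $-M$ is a Stieltjes matrix (positive definite with non-positive off-diagonal entries, since distinct irreducible curves intersect non-negatively), whence $(-M)^{-1}\ge 0$ entrywise and $b>0$. Your verification that $\mathrm{Null}(P)=\{C_1,\dots,C_k\}$ is also complete: $P\cdot C_i=0$ by construction, while for $C\notin\{C_1,\dots,C_k\}$ one has $P\cdot C\ge A\cdot C>0$. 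Nothing is missing.
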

   In the statement, the \textit{nef chamber} is the chamber
   $\Sigma_H$ associated with an ample divisor $H$.
   Its closure is the nef cone, and its interior is the ample
   cone.
   Suppose next that $X$ carries only finitely many negative
   curves (i.e., irreducible curves $C\subset X$ with negative
   self-intersection $C^2$). Then one has, as an immediate
   consequence of Prop.~\ref{prop-neg-def}
   (see~\cite[Prop.~1.5]{BFN}):

\begin{proposition}\label{prop-number-of-chambers}
   \begin{itemize}\compact
   \item[(i)]
   The number $z(X)$ of Zariski chambers on $X$ is given by
   $$
      z(X)=
      1+\#\sset{\tab[c]{
      negative definite principal submatrices of the \\
      intersection matrix of the negative curves on $X$}}\,.
   $$
   \item[(ii)]
   More generally, let $\liste C1r$ be distinct negative curves
   on $X$, and let $S$ be their intersection matrix. Then the
   number of Zariski chambers that are supported by a non-empty
   subset of $\set{\liste C1r}$ equals the number of negative
   definite principal submatrices of the matrix $S$.
   \end{itemize}
\end{proposition}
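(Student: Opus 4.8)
The plan is to derive both parts directly from Proposition~\ref{prop-neg-def}, which already gives a bijection between non-nef Zariski chambers and reduced divisors with negative definite intersection matrix. The only extra ingredient needed is that, under the finiteness hypothesis, reduced divisors supported on the negative curves are essentially the same data as principal submatrices of the fixed intersection matrix~$S$.

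\smallskip
First I would fix notation: let $\liste C1r$ be all the negative curves on $X$ (for part~(i)) or the chosen subset of them (for part~(ii)), and let $S=(C_i\cdot C_j)$ be their intersection matrix. A reduced divisor supported on these curves is by definition a divisor of the form $D=\sum_{i\in I}C_i$ with $I\subseteq\set{1,\dots,r}$; since the curves are pairwise distinct and irreducible, the assignment $D\mapsto I$ is a bijection between reduced divisors supported on $\set{\liste C1r}$ and subsets $I\subseteq\set{1,\dots,r}$. The intersection matrix of such a $D$ is exactly the principal submatrix $S_I=(C_i\cdot C_j)_{i,j\in I}$ obtained by selecting the rows and columns indexed by $I$. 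Hence $D$ has negative definite intersection matrix if and only if $S_I$ is negative definite, and this correspondence respects the empty set: $I=\emptyset$ corresponds to the zero divisor, which is excluded from the reduced divisors counted in Proposition~\ref{prop-neg-def}, just as the empty principal submatrix is excluded from the count on the right-hand side.

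\smallskip
For part~(ii), every Zariski chamber supported by a non-empty subset of $\set{\liste C1r}$ is, by Proposition~\ref{prop-neg-def}, of the form $\Sigma_P$ where the negative part of the corresponding reduced divisor is supported on these curves; composing the two bijections above gives a bijection between such chambers and non-empty subsets $I$ with $S_I$ negative definite, which is precisely the set of negative definite principal submatrices of $S$. For part~(i), I would additionally invoke the finiteness hypothesis: since $X$ carries only finitely many negative curves, \emph{every} reduced divisor with negative definite intersection matrix is automatically supported on the full list $\liste C1r$ (a curve in the support of a reduced divisor whose intersection matrix is negative definite must have negative self-intersection, hence is one of the $C_i$). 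Thus part~(i) is the special case of part~(ii) in which $\set{\liste C1r}$ is the set of \emph{all} negative curves, and one adds $1$ to account for the nef chamber, which is the unique chamber not covered by Proposition~\ref{prop-neg-def}.

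\smallskip
The argument is essentially bookkeeping once Proposition~\ref{prop-neg-def} is in hand; the only point requiring a moment's care~-- and the closest thing to an obstacle~-- is the observation just mentioned, namely that under the finiteness assumption the support of any negative definite reduced divisor consists of negative curves, so that nothing is lost by restricting attention to the fixed finite list. This is immediate from the fact that a diagonal entry $C_i^2$ of a negative definite matrix must be negative, but it is the one place where the hypothesis \engqq{$X$ carries only finitely many negative curves} is actually used.
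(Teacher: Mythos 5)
Your proof is correct and follows the same route the paper intends: Proposition~\ref{prop-number-of-chambers} is derived as an immediate consequence of the bijection in Proposition~\ref{prop-neg-def}, by identifying reduced divisors supported on the given negative curves with non-empty index subsets (equivalently, principal submatrices) and adding $1$ for the nef chamber. The bookkeeping details you supply, including the observation that diagonal entries of a negative definite matrix are negative so that no reduced negative definite divisor escapes the finite list, are exactly the intended ones.
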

   Here a \textit{principal submatrix} of a given
   $(n\times n)$-matrix is as usual a submatrix that arises
   by deleting $k$ corresponding rows and columns of the matrix,
   where $0\le k<n$.

   The subsequent algorithm computes the number of positive
   definite principal submatrices of a given symmetric matrix.
   In view of Proposition~\ref{prop-number-of-chambers}, this
   enables us to determine the number of Zariski chambers (by
   considering the negative of the intersection matrix).

\begin{algorithm}\label{algo}\rm
   The algorithm takes as input an integer $n\ge 1$ and a
   symmetric $(n\times n)$-matrix $A$ over $\Z$. It outputs all
   subsets $S\subset\set{1,\dots,n}$ having the property that the
   corresponding principal submatrix $A_S$ is positive definite.

   We adopt a backtracking strategy as in~\cite{BFN}, but instead
   of testing for positive definiteness by a computation of the
   determinant $\det(A_S)$ from scratch, the algorithm makes use
   of three procedures \emph{Grow}, \emph{Shrink}, and
   \emph{IsPosDef} to be discussed below.

   \begin{lines}
      Algorithm \emph{PosDef}
      Input: integer $n\ge 1$, symmetric matrix $A\in\Z^{n\times n}$
      Output: all positive definite principal submatrices of $A$

   \bigskip
      $k \gets 1$; $S\gets\emptyset$; $B\gets()$; $T\gets()$
      \emph{Grow}($S, k$)
      while $S\ne\emptyset$ do
      \+   Assert(($B=T\cdot A_S$) and ($B$ is in Bareiss form))
      \+   if \emph{IsPosDef}($S$) then
      \+\+      output $S$
      \+   else
      \+\+      \emph{Shrink}($S$)
      \+   end if
      \+   if $k < n$ then
      \+\+      $k \gets k + 1$; \emph{Grow}($S, k$)
      \+   else
      \+\+      if $S\ne\emptyset$ and $k=\max S$ then
      \+\+\+         \emph{Shrink}($S$)
      \+\+      end if
      \+\+      if $S\ne\emptyset$ then
      \+\+\+       $k \gets \max S$; \emph{Shrink}($S$); $k \gets k + 1$; \emph{Grow}($S, k$)
      \+\+   end if
      \+   end if
      end while
   \end{lines}

   We now explain the procedures \emph{Grow}, \emph{Shrink}, and
   \emph{IsPosDef}. They work on matrix variables $B$ and $T$
   that are global variables of \emph{PosDef}. At every stage of
   the algorithm, $B$ holds the Bareiss trigonalization of $A_S$.
   The auxiliary matrix variable $T$ is the essential tool that
   makes is possible to do the trigonalization incrementally. It
   holds the triangular matrix that one obtains from the unity
   matrix, when the same transformations are applied that have
   been applied $A_S$ to obtain $B$.

   The procedure \emph{Shrink(S)} removes the maximal element
   from $S$ and updates $B$ and $T$. The latter is achieved by
   simply discarding the last row and column of both $B$ and $T$.

   The procedure \emph{IsPosDef(S)} determines whether $A_S$ is positive
   definite. This can be done as follows: As the matrix
   $A_{S\setminus\max S}$ is known to be positive definite, $A_S$
   is positive definite if and only if its determinant is
   positive. And the latter can be read off the sign of the lower
   right entry of $B$, since in a Bareiss trigonalization this
   entry is always the determinant of the original matrix $A_S$.
   So we have:
   \begin{center}
      \emph{IsPosDef}$(S)= ( B[\max S, \max S] > 0 )$\,.
   \end{center}

   The procedure \emph{Grow(S, k)} includes a new element $k>\max
   S$ to the index set $S$ and updates $B$ and $T$ accordingly.
   It consists of the following steps:

   \begin{enumerate}
   \renewcommand\labelenumi{(G\arabic{enumi})}
   \item
      $S\gets S\union\set k$
   \item
      Attach the last row and the last column of $A_S$ to the bottom and right of $B$.
      Attach the last row and column of the unit matrix to $T$.
   \item
      Replace the last column $b$ of $B$ by $T\cdot b$ .
   \item
      Clear the last row of $B$ by means of the following procedure:
      \begin{lines}
         $s\gets |S|$
         for $i$ from $1$ to $s-1$ do
         \+   if $i = 1$ then
         \+\+      $d \gets 1$
         \+   else
         \+\+      $d \gets B[i-1, i-1]$
         \+   end if
         \+   for $j$ from $i+1$ to $s$ do
         \+\+      $B[s, j] \gets (B[s, j]\cdot B[i, i] - B[i, j]\cdot B[s, i])$ div $d$
         \+   end for
         \+   for $j$ from $1$ to $s$ do
         \+\+      $T[s, j] \gets (T[s, j]\cdot B[i, i] - T[i, j]\cdot B[s, i])$ div $d$
         \+   end for
         \+   $B[s, i] \gets 0$
         end for
      \end{lines}
   \end{enumerate}
   In the two loops of (G4), the division by $d$ is the Bareiss
   divison, made possible by Sylvester's identity, which keeps
   the size of the matrix entries from exploding (cf.~\cite{Bar68}).
   Note that \emph{Grow} does $O(s^2)$ loop iterations and
   hence has complexity $O(n^2)$, while $\emph{Shrink}$ and
   $\emph{IsPosDef}$ need only constant time.
\end{algorithm}

%*****************************************************************************

\section{Lines and chambers on the Segre-Schur quartic}\label{sect-Segre}

   In this section we consider the Segre-Schur quartic (see
   \cite{Sch1882}, \cite{Seg44}, and also
   \cite[Sect.~2.1]{Bar85}). By Segre's
   theorem~\cite{Seg43}, this remarkable surface carries the
   maximal number of lines that is possible for a smooth quartic
   in $\P^3$.
   In order to find the chambers supported on lines on this particular
   surface we need to determine the intersection matrix of all lines.
   We approach this task from a more general setting.
   Caporaso, Harris, Mazur \cite{CHM} and Boissiere,
   Sarti \cite{BS07} consider a class of surfaces in $\P^3$ known
   to contain many lines,  which was first studied by Segre \cite{Seg44}.
   Namely, these surfaces are given by an equation
   \begin{equation}\label{eq type}
      \phi(x_0,x_1)=\psi(x_2,x_3),
   \end{equation}
   with homogeneous polynomials $\phi$ and $\psi$ of common
   degree $d$. Clearly the surface we are interested in, the
   Segre-Schur quartic, is of this type with
   $$
   \phi(x,y)=\psi(x,y)=x(x^3-y^3).
   $$
   For any surface $S$ given by an equation (\ref{eq type}) of
   degree $d$ consider the sets of zeros $V(\phi)$ and
   $V(\psi)$ in $\P^1$. Denote by $\Gamma$ the group of
   automorphisms of $\P^1$ mapping $V(\phi)$ onto $V(\psi)$.

   \begin{proposition}[\cite{CHM} Lemma 5.1,
         \cite{BS07} Proposition 4.1]\label{phi-psi}
   The number of lines on $S$ equals
   $d^2+d\cdot\left|\Gamma\right|$.
\end{proposition}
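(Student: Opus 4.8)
The plan is to sort the lines on $S$ by their position relative to
   the two skew lines $L_1=\set{x_0=x_1=0}$ and $L_2=\set{x_2=x_3=0}$.
   Neither of them lies on $S$: on $L_1$ the defining equation restricts
   to $-\psi(x_2,x_3)$, which is not identically zero, and symmetrically
   for $L_2$. For a line $L\neq L_1,L_2$ I would first observe that $L$
   meets $L_1$ if and only if the ratio $(x_0:x_1)$ is constant along
   $L$ (the projection $[x_0:x_1]$ is linear, hence constant on $L$
   exactly when $L$ is coplanar with the centre $L_1$ of that
   projection), and likewise that $L$ meets $L_2$ if and only if
   $(x_2:x_3)$ is constant along $L$. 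This splits the lines on $S$ into
   three families, which I treat in turn; throughout I use that for the
   surfaces in question $\phi$ and $\psi$ are square-free of degree
   $d\ge2$.

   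\emph{Lines meeting both $L_1$ and $L_2$.} Here $(x_0:x_1)\equiv[c:d]$
   and $(x_2:x_3)\equiv[a:b]$, so $L=\set{(sc:sd:ta:tb):(s:t)\in\P^1}$,
   and $L\subset S$ forces $s^d\phi(c,d)=t^d\psi(a,b)$ identically in
   $(s:t)$, i.e.\ $\phi(c,d)=\psi(a,b)=0$; conversely any such pair of
   roots yields a line on $S$. Hence this family has exactly
   $|V(\phi)|\cdot|V(\psi)|=d^2$ members. \emph{Lines meeting exactly one
   of $L_1,L_2$.} I claim there are none on $S$: if, say,
   $(x_0:x_1)\equiv[c:d]$ while $(x_2:x_3)$ is non-constant, then $L$
   lies in the plane $\Pi=\set{dx_0-cx_1=0}$, on which the equation of
   $S$ becomes $\lambda^d\phi(c,d)=\psi(x_2,x_3)$ in suitable
   coordinates $(\lambda:x_2:x_3)$ on $\Pi$; restricting to $L$ and
   eliminating $\lambda$ would force $\psi$ either to vanish identically
   (when $\phi(c,d)=0$) or to be a $d$-th power of a linear form (when
   $\phi(c,d)\neq0$), both impossible.

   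\emph{Lines meeting neither $L_1$ nor $L_2$.} For such an $L$ the
   projections $[x_0:x_1]$ and $[x_2:x_3]$ both restrict to isomorphisms
   $L\xrightarrow{\sim}\P^1$, so
   $L=\set{[x_0:x_1:\alpha x_0+\beta x_1:\gamma x_0+\delta x_1]:(x_0:x_1)\in\P^1}$
   for coefficients $\alpha,\beta,\gamma,\delta$, and the $2\times2$
   matrix $M$ of these coefficients is invertible precisely because
   $L\cap L_2=\emptyset$. Since this parametrisation is normalised in the
   first two coordinates, $L$ determines $M$ exactly, not merely up to
   scalar. One checks that $L\subset S$ if and only if $\psi\circ M=\phi$
   as binary forms; this identity forces $M\cdot V(\phi)=V(\psi)$, hence
   $[M]\in\Gamma$. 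Conversely, given $[g]\in\Gamma$ and any matrix
   representative $M_0$, square-freeness of $\psi$ yields a unique
   $c\in\C^\ast$ with $\psi\circ M_0=c\,\phi$, and then
   $\psi\circ(tM_0)=t^dc\,\phi$, so the corresponding line lies on $S$
   exactly for the $d$ scalars $t$ with $t^d=c^{-1}$; distinct classes
   in $\Gamma$ clearly give disjoint sets of such lines. Thus this
   family contributes $d\cdot|\Gamma|$ lines, and summing the three
   counts gives $d^2+d\cdot|\Gamma|$.

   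The step I expect to require the most care is the last one: one must
   keep the $\mathrm{GL}_2$-versus-$\mathrm{PGL}_2$ bookkeeping straight,
   since a line of the third family is pinned down by a genuine matrix
   whereas $\Gamma$ lives in $\mathrm{PGL}_2$, and the extra factor $d$
   arises precisely from the $d$-th roots of the scaling constant $c$.
   The square-freeness of $\phi$ and $\psi$ is used here both to produce
   $c$ and, earlier, to eliminate the mixed family and to obtain the
   count $d^2$ on the nose.
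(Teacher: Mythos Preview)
Your argument is correct and follows the same decomposition as the paper's sketch (lines of the first type through the zeros of $\phi$ and $\psi$, lines of the second type disjoint from $L_1$ and $L_2$), with your labels for $L_1,L_2$ swapped relative to the paper. You in fact supply more detail than the paper's outline: you explicitly exclude lines meeting exactly one of $L_1,L_2$ via square-freeness, and you make the $\mathrm{GL}_2$-versus-$\mathrm{PGL}_2$ bookkeeping behind the factor $d$ precise, both of which the paper simply asserts.
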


   In order to establish terminology for our further investigation,
   we briefly go through the steps of the proof given in \cite{CHM}.
   \begin{itemize}
   \item
   Consider the sets of points $P_1,\ldots,P_d$ and
   $P_1',\ldots,P_d'$ of $S$ lying on the lines
   $L_1=V(x_2,x_3)$ and $L_2=V(x_0,x_1)$ respectively.
   For every $i,j\in \set{1,\ldots,d}$ the line $L_{i,j}$
   joining $P_i$ and $P_j'$ is contained in $S$. The
   $d^2$ lines $L_{i,j}$ are called \emph{lines of the first type}.
   \item
   Any further line $L$ on $S$ is called a \emph{line
   of the second type}. Any such line is
   disjoint from $L_1$ and $L_2$, guaranteeing that
   it is given by equations of the form
   \begin{eqnarray*}
      x_2&=&ax_0+bx_1\\
      x_3&=&cx_0+dx_1
   \end{eqnarray*}
   with $ad-bc\neq 0$. The invertible matrix
   $\begin{pmatrix}a&b\\c&d
   \end{pmatrix}$
   induces an automorphism of $\P^1$ mapping $V(\phi)$
   onto $V(\psi)$.
   \item
   Conversely, every automorphism in $\Gamma$ given
   by an invertible matrix $  \begin{pmatrix}a&b\\c&d
                  \end{pmatrix}$
   corresponds to $d$ distinct lines
   of the second type on $S$ given by equations
   \begin{eqnarray}
      x_2&=&\lambda\eta^k(ax_0+bx_1)\nonumber\\
      x_3&=&\lambda\eta^k(cx_0+dx_1), \label{lines}
   \end{eqnarray}
   for some $\lambda\in\C$ and $1\le k\le d$
   where $\eta$ denotes
         a primitive $d$-th root of unity.
   \end{itemize}
   With this knowledge we turn to explicitly determining the lines
   lying on the Segre-Schur quartic. The 16 lines of the
   first type are just the lines joining the zeros of the
   polynomial $\phi(x,y)=x(x^3-y^3)$ on $L_1$ and
   $L_2$ each considered as a $\P^1$. Setting
   $\xi=e^{\frac{2\pi i}3}$, these points are
   \begin{align*}
   P_1=(0:1:0:0) \qquad & P_2=(1:1:0:0)\\
   P_3=(\xi:1:0:0) \qquad & P_4=(\xi^2:1:0:0)
  \end{align*}
   and
   \begin{align*}
   P_1^\prime=(0:0:0:1) \qquad & P_2^\prime=(0:0:1:1)\\
   P_3^\prime=(0:0:\xi:1) \qquad & P_4^\prime=(0:0:\xi^2:1).
   \end{align*}
   The lines $L_{i,j}=\overline{P_iP_j^\prime}$
   joining them are given as
   $$
   L_{i,j}:\matr{ -1 & a_i &  0 &  0  \\
                0 & 0   & -1 & a_j }\matr{x_0\\ x_1\\ x_2\\ x_3}=0\,,
   $$
   where $a_i$ denotes the $i$-th entry of the tuple
   $(0,1,\xi,\xi^2)$.

   For the lines of the second type we note that
   in the case of the Segre-Schur quartic $\Gamma$ is
   the tetrahedral group $\mathcal T$, which
   is isomorphic to the product $D_2\times C_3$ of the
   dihedral group $D_2\cong (\Z/2\Z)^2$ and a cyclic
   group $C_3\cong \Z/3\Z$. More concretely, the group
   $D_2$ operating on the points $(0:1),(1:1),(1:\xi),
   (1:\xi^2)$ consists of elements
   $$
   T_1=\matr{1&0\\0&1} \qquad  T_2=\matr{-1&\xi\\2\xi^2&1}
   $$
   $$
   T_3=\matr{-1&\xi^2\\2\xi&1} \qquad  T_4=\matr{-1&1\\2&1} \ ,
   $$
   and the cyclic group $C_3$ is generated by the element
   $$
   Z=\matr{\xi&0\\
          0 &1}.
   $$
   Hence the elements of $\mathcal T$ are the
   automorphisms $Z^{k}T_j$ for $0\leq k\leq 2$
   and \linebreak$1\leq j \leq 4$. For an element
   $\gamma\in\Gamma$ represented by a matrix
   $\begin{pmatrix}a&b\\c&d\end{pmatrix}$
   we need to find the corresponding
   numbers $\lambda_m$ such that the lines
   $$
      L_{\gamma,\lambda_m}: \matr{\lambda_m a&\lambda_m b& -1& 0\\
     \lambda_m c&\lambda_m d& 0 &-1}\matr{x_0\\ x_1\\ x_2\\ x_3}=0\,
   $$
   from equation (\ref{lines}) lie on $S$.
   If $\phi(p_0,p_1)=0$ for a $p\in \mathbb P^1$, then
   $$
     \phi(\lambda a p_0+\lambda bp_1,\lambda cp_0+\lambda
     dp_1)=\lambda^4\phi(ap_0+bp_1,cp_0+dp_1)=0.
   $$
   The line $L_{\gamma,\lambda}$ thus intersects $S$ in
   the four points $(p_0:p_1:\lambda (ap_0+bp_1):\lambda (cp_0+dp_1))$,
   corresponding to the zeros $(p_0:p_1)$ of $\phi$. Now, let
   $(q_0:q_1)\in\mathbb P^1$ be any point on which
   $\phi$ does not vanish. Then the desired values for
   $\lambda$ are given as solutions of the equation
   $$
   \lambda^4=\frac{\phi(q_0,q_1)}{\phi(aq_0+bq_1,cq_0+dq_1)}.
   $$

   By way of example,
   let us carry out the computation for the automorphism
   $Z^1T_2\in\mathcal T$. It is given by the matrix
   $$
   \matr{ -\xi  & \xi^2\\
        2\xi^2 &   1  }.
   $$
   Choosing $(q_0:q_1)=(1:0)$, we get
   $$
    \psi(1,0)=1, \qquad \psi(-\xi,2\xi^2)=-\xi(-\xi^3-8\xi^6)=9\xi.
   $$
   Consequently, the corresponding values for $\lambda_m$ are
   $$
   \lambda_m= \frac{ i^m \xi^2 \sqrt{3}}{3},
   $$
   for $m=0,\ldots ,3$.

   In an analogous manner we find
   the factors
   $\lambda_m$
   for the remaining automorphisms in
   $\mathcal T$
   as shown in Table~\ref{fig:factors}.
   We set $\eta=\sqrt{3}/3$ and write
   $i=\sqrt{-1}$ for the imaginary unit.
   \begin{table}[ht]
      \centering
      \newlength\extraarrayskip \extraarrayskip=2ex
      \begin{tabular}{ccc}\tline
      automorphism & $\qquad$matrix$\qquad$ & $\qquad\lambda_m\qquad$
            \\\tline
            $Z^0T_1$ & $\matr{1&0\\0&1}$ & $  i^m $\\[\extraarrayskip]
            $Z^1T_1$ & $\matr{\xi&0\\0&1}$ & $ i^m\xi^2$\\[\extraarrayskip]
            $Z^2T_1$ & $\matr{\xi^2&0\\0&1}$ & $ i^m\xi$\\[\extraarrayskip]
            $Z^0T_2$ & $\matr{-1&\xi\\2\xi^2&1}$ & $ i^m\eta$\\[\extraarrayskip]
            $Z^1T_2$ & $\matr{-\xi&\xi^2\\2\xi^2&1}$ & $ i^m\eta\xi^2$\\[\extraarrayskip]
            $Z^2T_2$ & $\matr{-\xi^2&1\\2\xi^2&1}$ & $ i^m\eta\xi$\\[\extraarrayskip]
            $Z^0T_3$ & $\matr{-1&\xi^2\\2\xi&1}$ & $ i^m\eta$\\[\extraarrayskip]
            $Z^1T_3$ & $\matr{-\xi&1\\2\xi&1}$ & $ i^m\eta\xi^2$\\[\extraarrayskip]
            $Z^2T_3$ & $\matr{-\xi^2&\xi\\2\xi&1}$ & $ i^m\eta\xi$\\[\extraarrayskip]
            $Z^0T_4$ & $\matr{-1&1\\2&1}$ & $ i^m\eta$\\[\extraarrayskip]
            $Z^1T_4$ & $\matr{-\xi&\xi\\2&1}$ & $ i^m\eta\xi^2$\\[\extraarrayskip]
            $Z^2T_4$ & $\matr{-\xi^2&\xi^2\\2&1}$ & $ i^m\eta\xi$\\[\extraarrayskip]
            \tline
      \end{tabular}
      \caption{\label{fig:factors}%
         Factors $\lambda_m$ for the automorphisms in
         $\mathcal T$}
   \end{table}

   The lines $L_{Z^{k}T_j,\lambda_m}$ for
   $k=0,\ldots,2$, $j=1,\ldots,4$, $m=0,\ldots,3$
   are thus exactly the 48 lines of the second type on $S$.

   The configuration of the lines on the Segre-Schur
   quartic is given by the following
   \begin{proposition}
   Let $\delta: \R\to \set{0,1}$ denote the
   indicator function of the set $\set{0}$ mapping any non-zero
   number $x$ to $0$ and the number $0$ to $1$. With
   further notation as above, we have:
   \begin{itemize}
   \item Every line $L$ on $S$ has self-intersection
      $L^2=-2$.
   \item
    The intersection number of distinct lines of the
    first type is
   $$
      L_{i,j}\cdot L_{i',j'} = \delta(i-i')+\delta(j-j').
   $$
   \item The intersection number of a line of the
      first type with a line of the second type is
   $$
   L_{s,t}\cdot L_{Z^{k}T_j,\lambda_m} = \begin{cases}
   \delta(-\xi^{2k+t-1}+\xi^j-\xi^{s-1}-
      2\xi^{s+t+2k-j-2})& \text{, if } j\neq1,s\neq1,t\neq1,\\
      \delta(t+2k-j-1 \mod 3) &  \text{, if } j\neq1,t\neq1,s=1,\\
      \delta(s-1-j \mod 3) & \text{, if } j\neq1,s\neq1,t=1,\\
      \delta(s+k-t \mod 3) & \text{, if } j=1,s\neq1,t\neq1,\\
      \delta(s-1)\cdot\delta(j-1)\cdot\delta(t-1) & \text{, else }
      \end{cases}\\
   $$
   \item The intersection number of distinct lines of the
      second type is
   $$
      L_{Z^{k}T_j,\lambda_m}\cdot L_{Z^{k'}T_{j'},\lambda_{m'}}
      =  \begin{cases}
      \delta((i^{m'-m}-3)\xi^{2k}\\\quad
      +i^{m'-m}(1-3i^{m'-m})\xi^{2k'}\\\quad
      +2i^{m'-m}(\xi^{2k-j+j'}+\xi^{2k'-j'+j})) & \text{, if }
         j\neq1,j'\neq1,\\
      \delta(i^{2m}\xi^{2k}-i^mi^{m'}\xi^{2k'}\eta
      +i^{m'}\eta i^m\xi^{2k}
      \\\quad-3({i^{2m'}}\eta^2\xi^{2k'})) &    \text{, if } j=1,j'\neq1,\\
      \delta(m-m') & \text{, if } j=j'=1.
      \end{cases}\\
   $$
   \end{itemize}
   \end{proposition}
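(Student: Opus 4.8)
The plan is to reduce the whole proposition to the evaluation of a handful of $4\times 4$ determinants. Since the Segre--Schur quartic $S$ is a smooth quartic in $\P^3$, adjunction gives $K_S=0$, so every line $L\subset S$ satisfies $L^2=2g(L)-2=-2$; this settles the first bullet. For the remaining three, note that two distinct lines $L,L'\subset\P^3$ both lying on the smooth surface $S$ are either skew, in which case $L\cdot L'=0$ because the curves are disjoint, or they meet in a single point $p$, in which case their tangent directions are distinct lines inside the plane $T_pS$, so $L$ and $L'$ cross transversally on $S$ and $L\cdot L'=1$. Hence $L\cdot L'\in\{0,1\}$, and it equals $1$ precisely when $L$ and $L'$ are coplanar, i.e.\ precisely when the $4\times 4$ matrix obtained by stacking the two defining $2\times 4$ coefficient matrices is singular. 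Each of the last three bullets then amounts to the assertion that this determinant vanishes exactly when the argument of the displayed $\delta$ vanishes.

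For two lines of the first type this is transparent: the stacked matrix separates the coordinates $(x_0,x_1)$ from $(x_2,x_3)$, so after a row permutation it is block diagonal and its determinant is $\pm(a_i-a_{i'})(a_j-a_{j'})$; since the entries $a_1,\dots,a_4$ of the tuple $(0,1,\xi,\xi^2)$ are pairwise distinct, this vanishes iff $i=i'$ or $j=j'$ — and for distinct lines at most one of these holds — which is exactly $\delta(i-i')+\delta(j-j')$. For a first-type line against a second-type line, and for two second-type lines, I would write down the stacked matrix from the explicit data (the matrices $Z^kT_j$ of Table~\ref{fig:factors} and the factors $\lambda_m$), expand the determinant, and simplify using $\xi^3=1$, $1+\xi+\xi^2=0$, and the fourth-power equation defining the $\lambda_m$, which lets one clear the top powers of $\lambda$. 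One then checks that the simplified determinant equals the bracketed expression in the statement up to a nonzero monomial factor (a power of $\lambda_m$, a power of $\xi$, and a sign), so that its vanishing is equivalent to the stated $\delta$-condition.

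The first-type/second-type and second-type/second-type formulas break into cases because the defining data are not uniform: the point $P_1$ contributes $a_1=0$ rather than a power of $\xi$, and the automorphism $T_1=\mathrm{id}$ has a different shape from $T_2,T_3,T_4$ (which carry the entry $2$ in the lower-left corner). Each choice of whether an index equals $1$ degenerates different rows and columns of the $4\times 4$ matrix and thus yields a genuinely different reduced determinant, which must be reconciled branch by branch with the corresponding line of the piecewise formula. A few branches need an extra arithmetic step to reach the clean form — for instance, when $j=j'=1$ one uses the coprimality of the orders $4$ and $3$ of $i$ and $\xi$ to discard a spurious factor, leaving $\delta(m-m')$. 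I expect this case bookkeeping, together with the roots-of-unity manipulations in the second-type/second-type case where the $\lambda$-factors and the $\xi$-powers interact, to be the main obstacle; conceptually the proposition is fully captured by the coplanarity criterion established at the outset.
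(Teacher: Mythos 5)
Your proposal is correct and follows essentially the same route as the paper: adjunction with $K_S=0$ for the self-intersection, and the vanishing of the $4\times4$ determinant of the stacked coefficient matrices as the coplanarity criterion, verified case by case against the displayed $\delta$-expressions. The only minor deviations are that you handle the first-type/first-type case by the same determinant computation (where the paper instead argues synthetically that two such lines cannot meet off $L_1\cup L_2$), and that you make explicit the transversality argument showing $L\cdot L'\in\{0,1\}$, which the paper leaves implicit; both are harmless variations.
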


   \begin{proof}
   The statement about self-intersection of
   lines on $S$ follows immediately by
   adjunction and the fact that  $S$ has trivial
   canonical class. To prove the statement
   about the intersection of distinct lines of the first
   type, all we need to show is that two such
   lines $A,B$ cannot intersect in a point
   outside the (disjoint) lines $L_1$ and $L_2$
   (see sketch of proof of Proposition \ref{phi-psi}).
   But if this were the case, since $L_1$ and $L_2$
   both intersect $A$ and $B$, they would lie in the
   plane spanned by $A$ and $B$ and would
   therefore intersect each other.

   The remaining intersection numbers are calculated
         in the following way: Two lines in $\P^3$ given by
   equations
   \begin{eqnarray*}
      a_0x_0+a_1x_1+a_2x_2+a_3x_3&=&0\\
      b_0x_0+b_1x_1+b_2x_2+b_3x_3&=&0
   \end{eqnarray*}
   and
   \begin{eqnarray*}
      c_0x_0+c_1x_1+c_2x_2+c_3x_3&=&0\\
      d_0x_0+d_1x_1+d_2x_2+d_3x_3&=&0
   \end{eqnarray*}
   intersect each other if and only if the determinant
   of the matrix
   $$
   \matr{a_0&a_1&a_2&a_3&\\
           b_0&b_1&b_2&b_3&\\
           c_0&c_1&c_2&c_3&\\
           d_0&d_1&d_2&d_3&}
   $$
   vanishes.
   Note that for $s,t\neq1$ the line $L_{s,t}$ of the
   first type is given by
   $$
   L_{s,t}:\matr{ -1 & \xi^{s-2} &  0 &  0  \\
                0 & 0   & -1 & \xi^{t-2} }\matr{x_0\\ x_1\\ x_2\\ x_3}=0\,,
   $$
   and if $j\neq1$, then the line $L_{Z^{k}T_j,\lambda_m}$
   is given by
   $$
      L_{Z^{k}T_j,\lambda_m}:
      \matr{ -i^m\eta & i^m\eta\xi^{j-1} &  -1 &  0  \\
      2i^m\eta\xi^{2k+1-j} & i^m\eta\xi^{2k}   & 0 & -1 }\matr{x_0\\ x_1\\ x_2\\ x_3}=0\,.
   $$
   The computation of the determinants in question
   then yields the asserted formulas.
   Let us by way of illustration calculate the
   intersection number of the line $L_{1,t}$ of the first
   type, where $t\neq1$, and a line  $L_{Z^{k}T_j,\lambda_m}$
   of second type, with $j\neq1$. We have
   \begin{eqnarray*}
      \det\matr{ -1&0&0&0\\
      0&0&-1&\xi^{t-2}\\
      -i^m\eta&i^m\eta\xi^{j-1}&-1&0\\
      2i^m\eta\xi^{2k+1-j}& i^m\eta\xi^{2k}&0&-1 \\ } &=&
          i^m\eta(\xi^{j-1}-\xi^{2k+t-2}).
   \end{eqnarray*}
   Consequently, the lines intersect if and only if
   $$
      \xi^{j-1}=\xi^{2k+t-2},
   $$
   which is equivalent to
   $$
      t+2k-j-1 \equiv 0 \mod 3.
   $$
   \end{proof}

   Combining all the intersection numbers yields the
   intersection matrix of the Segre-Schur quartic
   displayed in Figure~\ref{fig:Schnittmatrix}.
   The order of rows and columns
   was chosen as follows: The first 16 rows
   correspond to the lines of the first type $L_{s,t}$
   with $s=1,\ldots,4$, $t=1,\ldots,4$. The remaining 48
   rows correspond to the lines of the second type
   $L_{Z^{k}T_j,\lambda_m}$ with $k=0,\ldots,2$,
   $m=0,\ldots,3$, $j=1,\ldots,4$.
   \begin{figure}[t]
      \centering
      \includegraphics[width=0.65\textwidth]{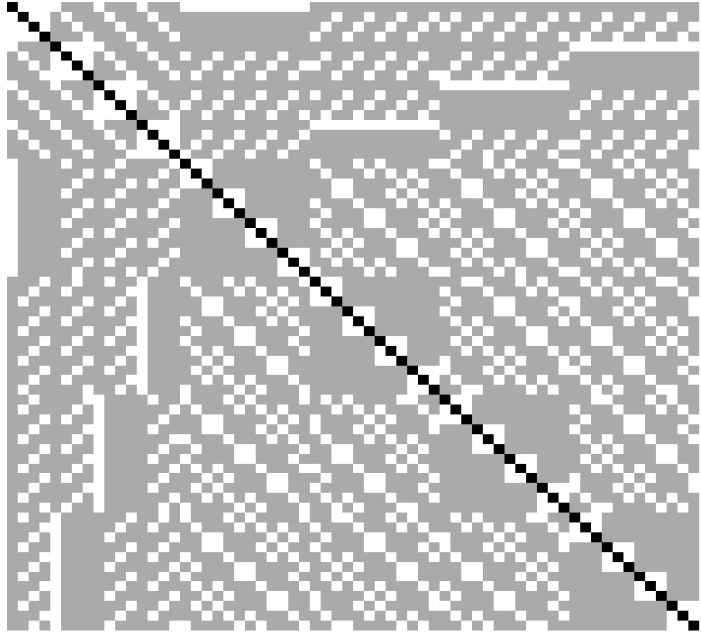}
      \caption{\label{fig:Schnittmatrix}%
         Intersection matrix of the 64 lines on the Segre-Schur
         quartic.
         The numerical entries of
         $-2$, $0$ and $1$ are replaced by black, grey and white boxes
         respectively.}
   \end{figure}

\begin{proof}[Proof of the theorem stated in the introduction]
   The intersection matrix of the 64 lines is of rank 20, hence
   the classes of the lines generate the N\'eron-Severi group
   $\NS(X)$ over $\Q$, and the K3 surface is singular (in the
   sense that $\NS(X)$ is of maximal possible rank). Therefore,
   by a result of Shioda and Inose~\cite{ShiIno},
   its automorphism group is infinite. The latter
   fact in turn implies that there are infinitely many
   $(-2)$-curves on $X$ (cf.~\cite[Remark 7.2]{Kov94}), hence
   $z(X)=\infty$.

   An application of Algorithm~\ref{algo} to the negative of the
   intersection matrix of the 64 lines shows that it has exactly
   8\,260\,383\,568
   positive
   definite principal submatrices. By taking the additional nef
   chamber into account we arrive at the claimed
   8\,260\,383\,569
   chambers.
   As $\rho(X)=20$, there cannot be Zariski chambers supported
   by more than 19 curves. The results of the algorithm show
   that chambers with 19 supporting curves actually occur. (In fact there
   are exactly 1728 such chambers, see the subsequent
   remark~\ref{remark-numbers-by-support}.)
\end{proof}

\begin{remark}\label{remark-numbers-by-support}\rm
   There is more numerical data that is of interest: How many
   Zariski chambers are there with support of given cardinality?
   Let us denote for $\ell\ge 1$ by $z_d^{(\ell)}(X)$ the number
   of Zariski chambers that are supported by $\ell$ curves of
   degree~$\le d$. Our computations yield for the Segre-Schur quartic
   the values in the following table.
   $$
      \begin{array}{rr}\tline
          \ell & z_1^{(\ell)}(X) \\ \tline
          % neue, richtige Zahlen:
          1  & 64         \\
          2  & 2016       \\
          3  & 41376      \\
          4  & 605856     \\
          5  & 6343776    \\
          6  & 45613512   \\
          7  & 217025520  \\
          8  & 674047818  \\
          9  & 1376161536 \\
         10  & 1900843848 \\
         11  & 1832006112 \\
         12  & 1264421472 \\
         13  & 635795760  \\
         14  & 233619648  \\
         15  & 61499712   \\
         16  & 11037702   \\
         17  & 1246368    \\
         18  & 69744      \\
         19  & 1728       \\ \tline
      \end{array}
   $$
   The numbers show that the surface clearly favours
   chambers of \engqq{medium size}.
\end{remark}

\begin{remark}\rm
   The lines
   $$
      L_{1,1},\ldots,L_{1,4},L_{2,1},L_{2,2},L_{2,3},L_{3,1},L_{3,2},L_{3,3}
   $$
   of the first type together with the lines
   \begin{eqnarray*}
      L_{Z^1T_1\lambda_0},L_{Z^2T_1\lambda_0},L_{Z^1T_2\lambda_0},
      L_{Z^2T_2\lambda_0},L_{Z^1T_3\lambda_0},\\L_{Z^2T_3\lambda_0},
      L_{Z^1T_1\lambda_1},L_{Z^2T_1\lambda_1},L_{Z^1T_2\lambda_1},
      L_{Z^2T_2\lambda_1}
   \end{eqnarray*}
   of the second type
   generate a sublattice $\Lambda\subset\NS(X)$ of rank 20 and
   discriminant~$-48$. So they yield a basis of
   $\NS(X)\tensor\Q$.
   It is shown in \cite[Appendix~B]{BS08} that
   the discriminant of the lattice $\NS(X)$ is $-48$ as well,
   hence the mentioned lines
   in fact generate $\NS(X)$.
\end{remark}

\begin{remark}\rm
   It is interesting to note that the 16 lines of the first type
   that one has on
   any quartic surface of type
   $$
      \phi(x_0,x_1)=\psi(x_2,x_3)
   $$
   give rise to 6521 Zariski chambers.
   As these 16 lines generate a sublattice of rank 10, one should
   compare the number 6521 with the number $z(X_8)=1\,501\,681$
   for the Del Pezzo surface $X_8$ of Picard number~9. It would
   be interesting to know on a more conceptual basis what
   properties of the lattice are crucial for obtaining a large
   number of chambers.
\end{remark}

%*****************************************************************************

\section{Comparison of efficiency}

   In order to illustrate the practicability of the new
   algorithm, we will now compare it with the algorithm from
   \cite{BFN} by providing sample run-times.\footnote{The
   run-times were obtained using \textsc{Delphi}
   implementations of $A_1$
   and $A_2$ on an Intel Core Duo CPU E8600 at 3.33\,GHz.}

   We consider first the Del Pezzo surfaces $X_1,\dots,X_8$ as
   in~\cite{BFN}. The following table lists their chamber numbers
   $z(X_r)$, and it shows the run-times (all in milliseconds) for
   the algorithm from~\cite{BFN}, called $A_1$ in the table, and
   for the new algorithm $A_2$. The value $n$ is the dimension of
   the intersection matrix\footnote{Here and in the sequel the
   algorithm is actually applied to the \textit{negative} of the
   intersection matrix in question.}
   (i.e., the number of negative curves
   on $X_r$).
   $$
      \begin{array}{c|*8c}\tline
         r & 1 & 2 & 3 & 4 & 5 & 6 & 7 & 8 \\ \tline
         n & 1 & 3 & 6 & 10 & 16 & 27 & 56 & 240 \\
         z(X_r)   & 2 & 5 & 18 & 76 & 393 & 2\,764 & 33\,645 & 1\,501\,681 \\ \tline
         A_1  & 0.55 &0.56&0.7 &0.85&2.3&33.5&1.48\times10^3&2.88 \times 10^4 \\
         A_2 & 0.8  &0.85&0.82&0.91&2.3&26.8&1.09\times10^3&1.92\times10^4 \\ \tline
          \text{Factor} & 0.69 & 0.66 & 0.85& 0.93 & 1 & 1.25 & 1.36& 1.5 \\ \tline
      \end{array}
   $$
   For $r\le 4$ the original algorithm $A_1$ is actually faster,
   presumably due to the overhead caused by the more
   sophisticated strategy of $A_2$. Starting with $r=6$, however,
   $A_2$
   gets more and more superior. This pattern will become even
   more visible when we now consider the
   lines on the Segre quartic. Specifically, we
   provide run-times for $A_1$ and $A_2$, when applied to the
   principal submatrices consisting of the first $n$ rows and
   columns of the intersection matrix from
   Sect.~\ref{sect-Segre}.
   \begin{center}\footnotesize
     $\begin{array}{c|*{20}c}\tline
         n        & 8 & 16 & 24 & 32 & 40 & 48 & 56 & 64 \\\tline
         A_1 &0.9& 65,55&4.16\times10^3&9.46\times10^4&1.43 \times10^6&1.57\times 10^7&1.3\times10^8& *   \\
         A_2 &0.96& 31.54&1.7\times10^3&3.61\times10^4&5.4\times10^5&5.66\times10^6 &4.6\times10^7&4.9\times10^8\\ \tline
         \text{Factor} & 0.94 & 2.08 & 2.45& 2.62 & 2.65 & 2.77 & 2.83 & * \\ \tline
      \end{array}$
   \end{center}
   Clearly, with growing matrix dimension algorithm $A_2$ shows its
   advantage over algorithm $A_1$.
   (As for the asterisk in the last column: The factors in the
   fourth line of the table seem to
   suggest a factor of about 3. If this
   assumption is true, then $A_1$ should for $n=64$
   have a run-time of about
   $15\times 10^8\,\mbox{ms}$, which is around 3
   weeks. In our attempts to verify the assumption, we were
   however not able to get results within that amount of time,
   and we found it technically difficult to check for run-times
   beyond a month.)

   The potential of $A_2$ is demonstrated
   even more clearly in the case of matrices with large definite
   principal submatrices, e.g., matrices which are negative definite
   themselves. We use a symmetric integer matrix with negative entries
   $-2$ on the main diagonal, super- and subdiagonal entries 1, and
   remaining entries 0. Note that a similar matrix can in fact be
   realized as an intersection matrix: For any number $k$ consider a
   surface of the type considered in section \ref{sect-Segre} of
   degree $k$, e.g., the Fermat surface of degree $k$.
   Among the $k^2$ lines of the first type we can pick
   $n=2k+1$ lines with the desired configuration.
   However, the self-intersections of these lines (and therefore the
   diagonal entries of the matrix) are then $2-k$.
   $$
      \begin{array}{c|*{20}c}\tline
         n    &   15 & 21 & 23 & 27 &33 &  \\\tline
         A_1 &    2.73\times10^2 & 3.84\times10^4&4.26\times10^5& 4.5\times10^6 &4.74\times10^8&  \\
         A_2 &    1.1\times10^2 & 1.17\times10^4&1.18\times10^5& 1.23\times10^6&1.06\times10^8&\\ \tline
         \text{Factor} & 2.48 & 3.28&    3.61    & 3.66& 4.48\\ \tline
      \end{array}
   $$

%*****************************************************************************

%*****************************************************************************

\bigskip
\small
   Tho\-mas Bau\-er,
   Fach\-be\-reich Ma\-the\-ma\-tik und In\-for\-ma\-tik,
   Philipps-Uni\-ver\-si\-t\"at Mar\-burg,
   Hans-Meer\-wein-Stra{\ss}e,
   D-35032~Mar\-burg, Germany.

\nopagebreak
   \textit{E-mail address:} \texttt{tbauer@mathematik.uni-marburg.de}

\bigskip
   David Schmitz,
   Fach\-be\-reich Ma\-the\-ma\-tik und In\-for\-ma\-tik,
   Philipps-Uni\-ver\-si\-t\"at Mar\-burg,
   Hans-Meer\-wein-Stra{\ss}e,
   D-35032~Mar\-burg, Germany.

\nopagebreak
   \textit{E-mail address:} \texttt{schmitzd@mathematik.uni-marburg.de}

%*****************************************************************************

\end{document}